				\newtheorem{theorem}{Theorem}
                \newtheorem{corollary}[theorem]{Corollary}
                \newtheorem{lemma}[theorem]{Lemma}
                \newtheorem{proposition}[theorem]{Proposition}
\theoremstyle{definition} \newtheorem*{definition}{Definition}
\theoremstyle{definition} 
\theoremstyle{definition} \newtheorem{remark}{Remark}
\theoremstyle{definition}
\begin{document}

\title{Local Coupling Property for Markov Processes with Applications to L\'evy Processes}
\author{Kasra Alishahi\\
Sharif University of Technology\\
Erfan Salavati\\
Faculty of Mathematics and Computer Science,\\
Amirkabir University of Technology (Tehran Polytechnic),\\
P.O. Box 15875-4413, Tehran, Iran.}
\date{}

\maketitle

MSC: 60J25, 60G51, 60E07.

\begin{abstract}
In this article, we define the new concept of local coupling property for markov processes and study its relationship with distributional properties of the transition probability. In the special case of L\`evy processes we show that this property is equivalent to the absolute continuity of the transition probability and also provide a sufficient condition for it in terms of the L\'evy measure. Our result is stronger than existing results for absolute continuity of L\'evy distributions.
\end{abstract}

\section{Introduction}

The coupling method is a very powerful tool for studying properties of stochastic processes. In case of Markov processes this method is used for proof of convergence to stationary measure. This method can also be used to prove the distributional properties of the transition probability measure of the Markov process.

A coupling between two stochastic objects, means a joint distribution whose marginal distributions are that of those objects. For two stochastic processes, those couplings are concerned in which the two processes coincide eventually. Such couplings are called successful couplings.

\section{Coupling property and its equivalent statements}
Let $\mathbb{R}^+=[0,\infty)$  and  $\{X_t\}_{t\in\mathbb{R}^+}$ be a continuous time markov process on a metric space $(E,\mathcal{E})$ and  $P^t(x,dy)$ be its transition probability measure.

$\mathcal{B}(E)$ and $\mathcal{C}(E)$ denote the space of respectively bounded and continuous real-valued functions on $E$.

For each $f\in\mathcal{B}(E)$ we define
\[ P^t f(x) = \int_E f(y) P^t(x,dy) \]

A function $u(t,x)$ is called space-time harmonic if for all $t,s>0$,
\[ u(s,x) = P^t u(s+t,.) (x) =  \int_E u(s+t,y) P^t(x,dy) \]

For any $x\in E$, Let $\mathbb{P}_x(X\in .)$ be the probability measure on $(E^{\mathbb{R}^+},\mathcal{E}^{\mathbb{R}^+})$ which is induced by the Markov process starting at $x$.

For any probability measure $\mu$ on $E$, let $\mathbb{P}_\mu(.)$ be the induced probability measure by the process starting with initial distribution $\mu$. In other words
\[ \mathbb{P}_\mu(.) = \int \mathbb{P}_x(.) d\mu(x) \]

For any $\ge 0$, the shift operator $\theta_t$ on $(E^{\mathbb{R}^+},\mathcal{E}^{\mathbb{R}^+})$ is defined as
\[ \theta_t X (.) = X(t+.) \]

\begin{definition}[Local Coupling Property]
The Markov process $X_t$ is said to have the local coupling property if for any $x\in E$ and $\epsilon>0$, there exists $\delta>0$ such that for any $y$ with $d(y,x)<\delta$, there exists a coupling $(X_t,Y_t)$ between $\mathbb{P}_x$ and $\mathbb{P}_y$ with the property that for
\[ T=T_{x,y}=\inf\{t\ge 0: X_t=Y_t\} \]
we have $P(T>\epsilon)<\epsilon$.
\end{definition}

Without loss of generality, we can assume that for $t\ge T$, $X_t=Y_t$ because we can assume that the two processes move together after time $T$.

By $\|\mu\|$ we mean the total variation norm of the signed measure $\mu$. We are now ready to state and prove the main theorem of this section.

\begin{theorem}\label{thm:Markov_main}
For a Markov process $X_t$ on a Polish space $E$, the following statements are equivalent:
	\begin{description}
		\item[(i)]
		$X$ has the local coupling property.
		\item[(ii)]
		For any $x\in E$ and $t>0$,
		\[ \lim_{y\to x} \|P^t(y,.)-P^t(x,.)\| = 0 \]
		\item[(iii)]
		For any $x\in E$ and any $t>0$,
		\[ \lim_{y\to x} \|\mathbb{P}_y (\theta_t X\in \cdot)-\mathbb{P}_x (\theta_t X\in \cdot)\| = 0 \]
	\end{description}
\end{theorem}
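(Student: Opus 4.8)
The plan is to establish the cycle of implications $\mathrm{(i)}\Rightarrow\mathrm{(iii)}\Rightarrow\mathrm{(ii)}\Rightarrow\mathrm{(i)}$. The workhorse throughout is the coupling inequality together with the existence of a maximal coupling on a Polish space: for any two probability measures $\mu,\nu$ there is a coupling $(\xi,\eta)$ achieving $P(\xi\neq\eta)=\tfrac12\|\mu-\nu\|$, while every coupling satisfies $P(\xi\neq\eta)\ge\tfrac12\|\mu-\nu\|$. I would record this (with the standard construction that splits off the common mass $\mu\wedge\nu$) as a preliminary lemma, since it is used in both directions.

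For $\mathrm{(i)}\Rightarrow\mathrm{(iii)}$, fix $x$ and $t>0$ and take any $\eta>0$; applying the local coupling property with parameter $\min(\eta,t)$ produces, for all $y$ with $d(y,x)<\delta$, a coupling $(X,Y)$ of $\mathbb{P}_x,\mathbb{P}_y$ with $X_u=Y_u$ for $u\ge T$ and $P(T>\min(\eta,t))<\min(\eta,t)$. Then $(\theta_t X,\theta_t Y)$ is a coupling of $\mathbb{P}_x(\theta_t X\in\cdot)$ and $\mathbb{P}_y(\theta_t X\in\cdot)$, and since $T\le t$ forces $\theta_t X=\theta_t Y$, the event $\{\theta_t X\neq\theta_t Y\}$ is contained in $\{T>t\}$. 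The coupling inequality yields $\tfrac12\|\mathbb{P}_x(\theta_t X\in\cdot)-\mathbb{P}_y(\theta_t X\in\cdot)\|\le P(T>t)\le P(T>\min(\eta,t))<\eta$, which is $\mathrm{(iii)}$. The implication $\mathrm{(iii)}\Rightarrow\mathrm{(ii)}$ is then immediate: evaluating a path at time $0$ pushes $\mathbb{P}_x(\theta_t X\in\cdot)$ forward to $P^t(x,\cdot)$, and the total variation norm does not increase under a measurable pushforward.

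The substance is in $\mathrm{(ii)}\Rightarrow\mathrm{(i)}$. Given $x$ and $\epsilon>0$, I would set $s=\epsilon$ and couple the two processes so that they agree at time $s$ with high probability and then move together. The Markov property yields the disintegration $\mathbb{P}_x=\int_E\big(\beta_x^{z}\otimes\mathbb{P}_{z}\big)\,P^{s}(x,dz)$, where $\beta_x^{z}$ is the law of the past $(X_u)_{0\le u\le s}$ conditioned on $X_{s}=z$ and $\mathbb{P}_z$ governs the future $\theta_s X$, the two being conditionally independent given $X_s=z$; on a Polish $E$ the regular conditional distributions $\beta_x^{z}$ exist and depend measurably on $z$. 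Taking a maximal coupling $\gamma$ of $P^{s}(x,\cdot)$ and $P^{s}(y,\cdot)$, I would build a coupling of $\mathbb{P}_x$ and $\mathbb{P}_y$ by drawing $(z,w)\sim\gamma$, sampling the two pasts independently from $\beta_x^{z}$ and $\beta_y^{w}$, and choosing a single common future $\sim\mathbb{P}_{z}$ when $z=w$ (and independent futures $\mathbb{P}_z,\mathbb{P}_w$ otherwise). A disintegration check confirms that the marginals are $\mathbb{P}_x$ and $\mathbb{P}_y$; on $\{z=w\}$ the glued paths coincide on $[s,\infty)$, so $T\le s=\epsilon$. Hence $P(T>\epsilon)\le\gamma(\{z\neq w\})=\tfrac12\|P^{s}(x,\cdot)-P^{s}(y,\cdot)\|$, and $\mathrm{(ii)}$ at time $t=\epsilon$ lets me pick $\delta$ so that $d(y,x)<\delta$ implies $\tfrac12\|P^{\epsilon}(x,\cdot)-P^{\epsilon}(y,\cdot)\|<\epsilon$.

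I expect the main obstacle to be making $\mathrm{(ii)}\Rightarrow\mathrm{(i)}$ fully rigorous: one must justify the bridge disintegration and the measurable dependence of $\beta_x^{z}$ on the endpoint (this is precisely where the Polish hypothesis enters, through the existence of regular conditional probabilities), and then verify that the concatenation of an independently sampled past with the prescribed future is measurable and really reproduces $\mathbb{P}_x$ and $\mathbb{P}_y$ as its marginals. Everything else reduces to the coupling inequality and a pushforward estimate.
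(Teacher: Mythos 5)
Your proposal is correct, but it closes the equivalence along a genuinely different cycle than the paper. The paper proves (i)$\Rightarrow$(ii) first (the same coupling inequality you use, applied to the time-$t$ marginals, and implicitly using, as you do, the convention that the processes move together after $T$), then obtains (ii)$\Leftrightarrow$(iii) by an analytic lemma: with $\mu=P^t(x,\cdot)$ and $\nu=P^t(y,\cdot)$ the Markov property gives $\mathbb{P}_x(\theta_t X\in\cdot)=\mathbb{P}_\mu$ and $\mathbb{P}_y(\theta_t X\in\cdot)=\mathbb{P}_\nu$, and a Radon--Nikodym computation with respect to $\rho=\mu+\nu$ yields the mixture bound $\|\mathbb{P}_\nu-\mathbb{P}_\mu\|\le\|\nu-\mu\|$; finally (iii)$\Rightarrow$(i) is proved by taking a maximal coupling of the two \emph{path} laws $\mathbb{P}_x(\theta_\epsilon X\in\cdot)$, $\mathbb{P}_y(\theta_\epsilon X\in\cdot)$ and extending it backwards over $[0,\epsilon)$ by regular conditional probabilities given the whole future path. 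You instead prove (i)$\Rightarrow$(iii)$\Rightarrow$(ii)$\Rightarrow$(i), and your hard step (ii)$\Rightarrow$(i) maximally couples only the time-$\epsilon$ marginals $P^\epsilon(x,\cdot)$, $P^\epsilon(y,\cdot)$, fills in conditionally independent bridges for the two pasts, and glues a \emph{common} future drawn from $\mathbb{P}_z$ on the event $\{z=w\}$. The content is the same but distributed differently: in your argument the Markov property acts probabilistically (it is what legitimizes appending the common future at time $\epsilon$), and this lets you bypass the paper's mixture inequality altogether; in the paper the Markov property acts analytically inside (ii)$\Rightarrow$(iii), after which the coupling construction needs only a backwards extension, because the futures have already been coupled as entire paths. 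Your route is arguably more economical --- one construction instead of a lemma plus a construction, and it derives the strongest-looking statement (i) directly from the weakest-looking one (ii) --- while the paper's route isolates a clean, reusable measure-theoretic fact (total variation of mixtures is dominated by total variation of the mixing measures) and keeps the path-space surgery to a minimum. Both arguments rest on exactly the same Polish-space machinery, namely existence and measurable dependence of regular conditional probabilities on path space in the sense of Lindvall's splitting technique, which you correctly identify as the only delicate point; the paper faces it in the same form (its kernels $K_x$, $K_y$ condition a path-valued variable), so your proposal is not at any rigor deficit relative to the published proof.
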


\begin{proof}
	\begin{description}
		\item[(i) $\implies$ (ii)]
		Assume $t>0$ is given and let $(X_t,Y_t)$ be a coupling of $\mathbb{P}_x$ and $\mathbb{P}_y$ which satisfies the definition of local coupling property for an $\epsilon<t$. Then we have
		\begin{eqnarray*}
			\|P^t(y,.)-P^t(x,.)\| & \le & \mathbb{P}(X_t\ne Y_t)\\
			&=&\mathbb{P}(T >t) \le \mathbb{P}(T >\epsilon) \le \epsilon
		\end{eqnarray*}
Now by letting $\epsilon\to 0$ the statement follows.

		\item[(ii) $\Leftrightarrow$ (iii)]
		Let $\mu=P^t(x,.)$ and $\nu=P^t(y,.)$. We have
		\[ \mathbb{P}_y (\theta_t X\in .) - \mathbb{P}_x(\theta_t X\in .) = \mathbb{P}_\nu (.) - \mathbb{P}_\mu(.) \]
Now let $\rho=\mu+\nu$ and assume that $g_\mu=\frac{d\mu}{d\rho}$ and $g_\nu=\frac{d\nu}{d\rho}$ are respectively the Radon-Nikodym derivatives of  $\mu$ and $\nu$ with respect to $\rho$. Hence we have,
	\[ \mathbb{P}_\nu (.) - \mathbb{P}_\mu(.) = \int \mathbb{P}_x (.) g_\nu(x)d\rho(x) - \int \mathbb{P}_x (.) g_\mu(x) d\rho(x)\]
	\[ \le \int_{g_\nu\ge g_\mu} (g_\nu - g_\mu) d \rho = \frac{1}{2} \|\nu-\mu\| \]
which implies $\|\mathbb{P}_\nu - \mathbb{P}_\mu \| \le \|\nu-\mu \|$. The other direction of the statement is obvious.

		\item[(iii) $\implies$ (i)]
Given $x$ and $\epsilon>0$, there exists $\delta>0$ such that for any $y$ with $d(y,x)<\delta$,
		\[ \|\mathbb{P}_y(\theta_\epsilon X\in .) - \mathbb{P}_x (\theta_\epsilon X \in .) \| <2 \epsilon \]
Now consider the maximal coupling between these two measures and denote it by $(\{X_x(t)\}_{t\ge \epsilon},\{X_y(t)\}_{t\ge \epsilon})$.

Now using the regular conditional probability we extend this coupling to $0\le t <\epsilon$. To do this, we follow the machinery used in \cite{Lindvall}, section 15. Note that by the Polish assumption, there exists a regular version of the conditional probabilities
\[ \mathbb{P}_x(X\in .| \theta_\epsilon X =Z) , \quad \mathbb{P}_y(X\in .| \theta_\epsilon X =Z) \]
which we denote them by two transition kernels $K_x(Z,.)$ and $K_y(Z,.)$ on $E^{\mathbb{R}^+}\times E^{\mathbb{R}^+}$. Now define a probability measure on $E^{\mathbb{R}^+}\times E^{\mathbb{R}^+}$ by the following,
\[ \tilde{\mathbb{P}} (A\times B)= \mathbb{E}( K_x(\theta_\epsilon X_x,A) K_y(\theta_\epsilon X_y,B) ) \]
This extends to a coupling of $\mathbb{P}_x$ and $\mathbb{P}_x$.

Now we have
		\[ \mathbb{P}(T>\epsilon) = \mathbb{P}(X_x(\epsilon)\ne X_y(\epsilon)) = \frac{1}{2}\|\mathbb{P}_y(\theta_\epsilon X\in .) - \mathbb{P}_x(\theta_\epsilon X\in .) \| <\epsilon \]

\end{description}
\end{proof}

\begin{proposition}
If $X_t$ is a Markov process with the local coupling property then
\begin{description}
		\item[(i)]
Every space-time harmonic function which is bounded, is continuous with respect to $x$.

		\item[(ii)]
	For any $t>0$ and any $f\in \mathcal{B}(E)$, we have $P^t f\in \mathcal{C}(E)$.

		\end{description}
\end{proposition}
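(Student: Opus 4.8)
The plan is to prove both parts as direct consequences of the total-variation continuity characterization established in Theorem~\ref{thm:Markov_main}, specifically the equivalence (i)~$\Leftrightarrow$~(ii). The key observation is that for any bounded measurable $f$ and any signed measure $\mu$, integration against $f$ is controlled by the total variation norm via
\[ \left| \int f \, d\mu \right| \le \|f\|_\infty \, \|\mu\|. \]

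For part (ii), I would fix $t>0$, $f\in\mathcal{B}(E)$, and a point $x\in E$. Writing the difference of the semigroup values as an integral against a difference of transition measures,
\[ P^t f(y) - P^t f(x) = \int_E f(z)\, \bigl( P^t(y,dz) - P^t(x,dz) \bigr), \]
the bound above gives $|P^t f(y) - P^t f(x)| \le \|f\|_\infty \, \|P^t(y,\cdot) - P^t(x,\cdot)\|$. By Theorem~\ref{thm:Markov_main}, statement (i) implies statement (ii), so the local coupling property forces the right-hand side to tend to $0$ as $y\to x$. Hence $P^t f$ is continuous at every $x$, which is exactly the assertion $P^t f \in \mathcal{C}(E)$.

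For part (i), let $u(t,x)$ be a bounded space-time harmonic function. Fixing $s>0$ and $t>0$, the harmonicity relation reads $u(s,x) = \int_E u(s+t,z)\, P^t(x,dz)$, so the function $x \mapsto u(s,x)$ is precisely $P^t g$ for the bounded function $g(\cdot) = u(s+t,\cdot)$. The plan is then to invoke part (ii) directly: since $g\in\mathcal{B}(E)$, we have $P^t g \in \mathcal{C}(E)$, and therefore $x\mapsto u(s,x)$ is continuous. As this holds for every fixed $s>0$, the space-time harmonic function is continuous in its spatial variable.

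I do not anticipate a serious obstacle here, since the heavy lifting is already done by Theorem~\ref{thm:Markov_main}; the proposition is essentially a dual reformulation of the total-variation continuity. The one point requiring mild care is the order of quantifiers in part (i): one must first fix $s$ and exploit that $u(s+t,\cdot)$ is a genuinely bounded function (guaranteed by the hypothesis that $u$ is bounded) before applying part (ii), and it is worth noting that continuity in $x$ is claimed for each fixed time argument rather than jointly. Once this bookkeeping is in place, both statements follow immediately from the pairing inequality and the theorem.
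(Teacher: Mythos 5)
Your proposal is correct and follows essentially the same route as the paper: both rest on the pairing bound $|\int f\,d\mu| \le \|f\|_\infty\|\mu\|$ combined with the total-variation continuity from Theorem~\ref{thm:Markov_main}. The only cosmetic difference is that you derive part (i) by reduction to part (ii) via $u(s,\cdot)=P^t u(s+t,\cdot)$, whereas the paper applies the same inequality directly in both parts.
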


\begin{proof}
\begin{description}
				\item[(i)]
Let $u(t,x)$ be a bounded harmonic function and $s>0$. Choose $t>0$ arbitrarily. We have
	\[ |u(t,y)-u(t,x)| = |\int u(s+t,z) P^t(y,dz) - \int u(s+t,z) P^t(x,dz)| \le \|u(s+t,.)\| . \|P^t(y,.)-P^t(x,.)\| \to 0\]
		\item[(ii)]
	\[ |P^t f(y)-P^t f(x)| = |\int f(z) P^t(y,dz) - \int f(z) P^t(x,dz)| \le 	\|f\| . \|P^t(y,.)-P^t(x,.)\| \to 0\]
\end{description}
\end{proof}

\section{Local Coupling Property for L\'evy Processes}
In this section we show that for L\'evy processes, the local coupling property is equivalent to absolute continuity of the transition probability measure with respect to the Lebesgue measure. We also provide a sufficient condition for local coupling property in terms of the L\'evy measure.

We first prove a lemma.

\begin{lemma}\label{lem:absolute_cont}
Let $\mu$ be a Borel measure on $\mathbb{R}$. For $a\in \mathbb{R}$ let $\mu_a$ be the translation of $\mu$ by $a$. Then $\mu$ is absolutely continuous if and only if
\[ \lim_{a\to 0} \| \mu_a - \mu \| = 0\]
\end{lemma}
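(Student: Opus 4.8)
The plan is to prove the two implications separately, treating $\mu$ as a finite Borel measure so that the total variation norm is finite (which is the relevant case, since the lemma will be applied to transition probabilities of L\'evy processes).

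For the forward direction (absolute continuity implies $\|\mu_a - \mu\| \to 0$), I would invoke Radon--Nikodym: if $\mu$ is absolutely continuous it has a density $f \in L^1(\mathbb{R})$, and the translate $\mu_a$ has density $f(\cdot - a)$. The total variation then becomes $\|\mu_a - \mu\| = \int_{\mathbb{R}} |f(x-a) - f(x)|\, dx$, so the claim reduces to continuity of translation in $L^1$, i.e. $\lim_{a\to 0}\|f(\cdot - a) - f\|_{L^1} = 0$. This is standard: approximate $f$ in $L^1$ by a compactly supported continuous $g$, for which $\|g(\cdot - a) - g\|_{L^1}\to 0$ by uniform continuity, and bound the error by $2\|f-g\|_{L^1}$.

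For the converse, the plan is to approximate $\mu$ in total variation by manifestly absolutely continuous measures obtained by averaging translates. Define, for $h>0$, the averaged measure $\nu_h = \frac{1}{2h}\int_{-h}^{h}\mu_a\, da$. By Fubini one checks that $\nu_h$ is absolutely continuous, with density $g_h(y) = \frac{1}{2h}\mu([y-h,y+h])$. The key estimate is that averaging does not increase the distance to $\mu$:
\[ \|\nu_h - \mu\| = \Bigl\| \frac{1}{2h}\int_{-h}^{h}(\mu_a - \mu)\, da \Bigr\| \le \sup_{|a|\le h}\|\mu_a - \mu\|, \]
which tends to $0$ as $h\to 0$ by hypothesis. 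Thus $\mu$ is a total variation limit of absolutely continuous measures.

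It then remains to show that the absolutely continuous measures form a total-variation-closed set, which I expect to be the main (though standard) obstacle. For this I would use the Lebesgue decomposition $\mu = \mu_{ac} + \mu_s$ with respect to Lebesgue measure. Since $\nu_h - \mu_{ac}$ is absolutely continuous while $\mu_s$ is singular, these two signed measures are mutually singular, so the total variation norm is additive: $\|\nu_h - \mu\| = \|\nu_h - \mu_{ac}\| + \|\mu_s\| \ge \|\mu_s\|$. Letting $h\to 0$ forces $\|\mu_s\| = 0$, whence $\mu = \mu_{ac}$ is absolutely continuous, completing the proof.
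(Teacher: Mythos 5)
Your proof is correct, but the converse direction takes a genuinely different route from the paper's. The paper verifies absolute continuity directly from the definition: given a Borel set $A$ with $\lambda(A)=0$, it applies Fubini to $0=\int_{\mathbb{R}} \lambda(A+x)\,d\mu(x)$ to conclude $\int_{\mathbb{R}} \mu_y(A)\,d\lambda(y)=0$, hence $\mu_y(A)=0$ for Lebesgue-a.e.\ $y$; since $y\mapsto\mu_y(A)$ is nonnegative and continuous at $y=0$ by hypothesis, this forces $\mu(A)=0$. You instead approximate $\mu$ in total variation by the averaged measures $\nu_h$ (absolutely continuous by essentially the same Fubini computation) and then invoke closedness of the absolutely continuous measures under total variation limits, which you prove via the Lebesgue decomposition and additivity of the total variation norm on mutually singular pieces. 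Each approach buys something: the paper's argument is shorter and, notably, only ever uses setwise convergence $\mu_y(A)\to\mu(A)$ for each fixed $A$ --- a strictly weaker hypothesis than total variation convergence --- whereas your mollification argument is more constructive (it exhibits the density as a limit of $g_h(y)=\frac{1}{2h}\mu([y-h,y+h])$, essentially the symmetric derivative of $\mu$), yields the quantitative bound $\|\mu_s\|\le\sup_{|a|\le h}\|\mu_a-\mu\|$ on the singular part, and isolates the reusable fact that absolutely continuous measures form a TV-closed set. The forward direction is identical in both proofs (Radon--Nikodym plus continuity of translation in $L^1$), and your explicit restriction to finite measures is harmless, since the lemma is applied to transition probabilities and the total variation hypothesis is only meaningful in that setting anyway.
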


\begin{proof}
Denote the Lebesgue measure on $\mathbb{R}$ by $\lambda$. To prove the if part, assume $\lambda(A)=0$. Hence for any $x\in\mathbb{R}$, $\lambda(A+x)=0$ and therefore
\begin{multline}\label{equation:proof of lemma_Fubini}
0 = \int_\mathbb{R} \lambda(A+x) d\mu(x) = \int_\mathbb{R} \int_\mathbb{R} 1_A(x+y) d\lambda(y) d\mu(x) \\
=  \int_\mathbb{R} \int_\mathbb{R} 1_A(x+y) d\mu(x) d\lambda(y) = \int_\mathbb{R} \mu_y(A) d\lambda(y)
\end{multline}
On the other hand, by assumption, the function $y\mapsto \mu_y(A)$ is continuous at $y=0$ and since is nonnegative, it follows from \eqref{equation:proof of lemma_Fubini} that $\mu(A)=\mu_0(A) = 0$.

To prove the only if part, note that if $\mu \ll  \lambda$, it follows from the Radon-Nikodym theorem that  for some $f\in L^1(\mathbb{R})$, $d\mu(x) = f(x) d\lambda(x)$ and therefore $d\mu_a(x) = f(x+a) d\lambda(x)$ which implies
\[ \| \mu_a - \mu \| = \| f(a+.)-f(.)\|_{L^1} \]
 and the right hand side tends to 0 as $a\to 0$.
\end{proof}

\begin{remark}
The special case that $\mu$ is the transition probability of a L\'evy process has been proved in~\cite{Hawkes}.
\end{remark}

\begin{theorem}\label{thm:Levy_absol}
The L\'evy process $X_t$ has the local coupling property if and only if its transition probability measure is absolutely continuous for any $t>0$.
\end{theorem}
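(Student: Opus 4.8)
The plan is to reduce the statement directly to the two tools already available: the equivalence (i) $\Leftrightarrow$ (ii) from Theorem~\ref{thm:Markov_main} and the translation criterion for absolute continuity in Lemma~\ref{lem:absolute_cont}. The bridge between them is the spatial homogeneity of a L\'evy process, which lets me rewrite the continuity-in-total-variation condition (ii) as a statement purely about translations of a single measure.

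First I would record the structural fact that for a L\'evy process the transition kernel is translation invariant: writing $\mu^t = P^t(0,\cdot)$ for the law of $X_t$ started at the origin, one has $P^t(x,\cdot) = (\mu^t)_x$, the translate of $\mu^t$ by $x$, for every $x$ and every $t>0$. This is immediate from the fact that $X_t$ under $\mathbb{P}_x$ has the same law as $x + X_t$ under $\mathbb{P}_0$. Next I would exploit that the total variation norm is invariant under translation, so that for all $x,y$ and $t>0$,
\[ \|P^t(y,\cdot)-P^t(x,\cdot)\| = \|(\mu^t)_y - (\mu^t)_x\| = \|(\mu^t)_{y-x} - \mu^t\|, \]
where the last equality follows by translating both measures by $-x$. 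Setting $a = y-x$, the limit $y\to x$ in condition (ii) becomes exactly $\lim_{a\to 0}\|(\mu^t)_a - \mu^t\| = 0$.

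With this reformulation in hand, the two directions fall out together. By Theorem~\ref{thm:Markov_main}, $X_t$ has the local coupling property if and only if condition (ii) holds for every $x$ and every $t>0$; by the homogeneity computation above this is equivalent to $\lim_{a\to 0}\|(\mu^t)_a - \mu^t\| = 0$ for every $t>0$ (the dependence on $x$ having disappeared). Lemma~\ref{lem:absolute_cont} then says this last condition holds for a given $t$ precisely when $\mu^t$ is absolutely continuous with respect to Lebesgue measure. Finally, since $P^t(x,\cdot) = (\mu^t)_x$ is a translate of $\mu^t$, absolute continuity of $\mu^t$ is the same as absolute continuity of the transition probability $P^t(x,\cdot)$ for every $x$, so the chain of equivalences closes.

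I do not expect a genuine obstacle here, since the two quoted results do almost all of the work; the only point requiring care is the bookkeeping in the homogeneity step, namely checking that the reduction of the double limit over $(x,y)$ to a single limit over $a = y-x$ is legitimate and that translation invariance of the total variation norm is applied correctly. That step, while routine, is the crux of the argument and should be stated explicitly so that the reader sees how the $x$-dependence in Theorem~\ref{thm:Markov_main} collapses to the translation-only criterion needed for Lemma~\ref{lem:absolute_cont}.
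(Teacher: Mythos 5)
Your proposal is correct and follows essentially the same route as the paper: reduce the local coupling property to condition (ii) of Theorem~\ref{thm:Markov_main}, use the translation invariance of the L\'evy transition kernel (the paper writes this as $P^t(x+a,\cdot)=P^t(x,a+\cdot)$) to collapse the condition to $\lim_{a\to 0}\|(\mu^t)_a-\mu^t\|=0$, and conclude by Lemma~\ref{lem:absolute_cont}. The only difference is that you spell out the bookkeeping (translation invariance of the total variation norm, the disappearance of the $x$-dependence) that the paper leaves implicit.
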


\begin{proof}
By Theorem~\ref{thm:Markov_main} the local coupling property is equivalent to
\begin{equation}\label{eq1}
\lim_{a\to 0} \|P^t(x+a,.)-P^t(x,.)\| = 0, \forall t>0
\end{equation}
On the other hand for L\'evy processes
\[ P^t(x+a,.)= P^t(x,a+.) \]
and hence by Lemma~\ref{lem:absolute_cont}, equation \eqref{eq1} is equivalent to the absolute continuity of the transition probability measure.
\end{proof}

We state two straight forward consequences of Theorem~\ref{thm:Levy_absol}.
\begin{corollary}
	The Brownian motion has the local coupling property.
\end{corollary}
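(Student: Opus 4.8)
The final statement is the Corollary that Brownian motion has the local coupling property.

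The plan is to apply Theorem~\ref{thm:Levy_absol} directly. Brownian motion is a Lévy process, so by that theorem it suffices to verify that its transition probability measure is absolutely continuous with respect to Lebesgue measure for every $t>0$. This is standard: the transition probability $P^t(x,dy)$ of standard Brownian motion on $\mathbb{R}$ is the Gaussian measure with density
\[ p_t(x,y) = \frac{1}{\sqrt{2\pi t}} \exp\left(-\frac{(y-x)^2}{2t}\right) \]
with respect to Lebesgue measure. So for each fixed $t>0$ and $x$, we have $P^t(x,dy) = p_t(x,y)\,dy$, which exhibits the absolute continuity explicitly.

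The only step requiring any care is the observation that $t$ ranges over $(0,\infty)$ and that absolute continuity must hold for each such $t$; this is immediate since the Gaussian density above is well-defined and integrable for every $t>0$. At $t=0$ the transition measure degenerates to the point mass $\delta_x$, but the theorem only requires absolute continuity for $t>0$, so this degenerate case is irrelevant.

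There is essentially no obstacle here, as the corollary is a direct specialization of Theorem~\ref{thm:Levy_absol} to the best-known Lévy process. The proof therefore consists of invoking the theorem and citing the explicit Gaussian density.

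\begin{proof}
Brownian motion is a L\'evy process, so by Theorem~\ref{thm:Levy_absol} it suffices to check that its transition probability measure is absolutely continuous with respect to Lebesgue measure for every $t>0$. For standard Brownian motion on $\mathbb{R}$, the transition measure is Gaussian with density
\[ p_t(x,y) = \frac{1}{\sqrt{2\pi t}} \exp\left(-\frac{(y-x)^2}{2t}\right), \]
that is, $P^t(x,dy) = p_t(x,y)\,d\lambda(y)$, where $\lambda$ denotes Lebesgue measure. Since this density is well-defined and integrable for every $t>0$, the transition probability measure is absolutely continuous for all $t>0$, and the local coupling property follows.
\end{proof}
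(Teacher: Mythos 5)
Your proof is correct and follows exactly the paper's own argument: invoke Theorem~\ref{thm:Levy_absol} and note that the Gaussian transition measure is absolutely continuous for every $t>0$. The explicit density you write out is a harmless elaboration of the same one-line observation.
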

\begin{proof}
The transition probability measure is Gaussian which is absolutely continuous.
\end{proof}

\begin{corollary}\label{cor:sum_ind_Levy}
Let $X_t$ and $Y_t$ be two independent L\'evy processes and assume that $X_t$ has the local coupling property. Then so is $X_t+Y_t$.
\end{corollary}

\begin{proof}
	The transition probability of the sum is convolution of two transition probability measure. Hence if one of them is absolutely continuous, so is the sum.
\end{proof}

Now consider a general one dimensional L\'evy process with L\'evy triplet $(b,\sigma,\nu)$. It is clear that $b$ doesn't play any role in the local coupling property. It also follows from the previous two lemmas that if a $\sigma>0$ then the process has the local coupling property. Hence it remains to study the processes with triplets $(0,0,\nu)$. In what follows, we assume that $X_t$ is a L\'evy process with triplet $(0,0,\nu)$. We use the L\'evy-It\"o representation of $X_t$,
\[ X_t = \int_{|x|\le 1} x\tilde{N}(t,dx) + \int_{|x|> 1} xN(t,dx)\]
where $N$ is the Poisson random measure with intensity $dt\nu(dx)$ and $\tilde{N}$ is its compensation.

The following theorem provides a sufficient condition for local coupling property of the $X_t$ in terms of its L\'evy measure $\nu$. Recall that the minimum of two measures $\mu$ and $\nu$, denoted by $\mu\wedge\nu$ is defined as
\[ \mu\wedge\nu = \frac{1}{2} (\mu+\nu - \|\mu - \nu \|) \]
Now let $\bar{\nu}(dx)=\nu(-dx)$ and define $\rho=\nu\wedge\bar{\nu}$. Also define the auxiliary function $\eta(r)=\int_0^r x^2 \rho(dx)$.

\begin{theorem}\label{thm:Levy_suffic}
Assume that $\int_0^1 \frac{r}{\eta(r)} dr <\infty$. Then $X_t$ has the local coupling property.
\end{theorem}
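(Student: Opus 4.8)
The plan is to reduce to a symmetric process, build a reflection coupling whose difference is itself a L\'evy process, and then read the hypothesis $\int_0^1\frac{r}{\eta(r)}\,dr<\infty$ as the statement that this difference descends to $0$ in a time that is small when the two processes start close together. First I would strip away everything irrelevant. Since $\rho=\nu\wedge\bar\nu\le\nu$ is a L\'evy measure and $\nu-\rho\ge 0$ is also a L\'evy measure, a thinning of the driving Poisson random measure lets me write $X_t=X_t^{\rho}+X_t^{\nu-\rho}$ as a sum of two independent L\'evy processes with triplets $(0,0,\rho)$ and $(0,0,\nu-\rho)$ (and, as already observed for $(0,0,\nu)$, any drift is irrelevant to the property at hand). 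By Corollary~\ref{cor:sum_ind_Levy} it then suffices to treat the process with triplet $(0,0,\rho)$, so from here on I assume $\nu=\rho$ is symmetric.

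Next I would construct the coupling. Let $X$ be driven by a Poisson random measure $N$ of intensity $dt\,\rho(dz)$, started at $x$, and define $Y$, started at $y$, by reflecting each jump: whenever $X$ jumps by $z$, $Y$ jumps by $-z$, until the first time the two meet, after which they move together. Because $\rho$ is symmetric, $z\mapsto -z$ preserves $\rho$, so $Y$ is again a L\'evy process with triplet $(0,0,\rho)$ and the pair is a genuine coupling of $\mathbb{P}_x$ and $\mathbb{P}_y$. The difference $D_t=X_t-Y_t$ then jumps by $2z$ at rate $\rho(dz)$, i.e. $D$ is a symmetric L\'evy process started at $d=x-y$ whose L\'evy measure is the image of $\rho$ under $z\mapsto 2z$; a change of variables gives $\eta_D(r)=4\,\eta(r/2)$, so that $\int_0^1\frac{r}{\eta_D(r)}\,dr<\infty$ if and only if $\int_0^1\frac{r}{\eta(r)}\,dr<\infty$. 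In view of Theorem~\ref{thm:Markov_main} and Theorem~\ref{thm:Levy_absol}, it is enough to show that for the coupling time $T=\inf\{t:D_t=0\}$ one has $\mathbb{P}(T>\epsilon)\to 0$ as $d\to 0$.

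The quantitative heart is to bound $\mathbb{E}[T]$ by the tail of the convergent integral. I would decompose the descent of $|D|$ into dyadic scales $r_k=|d|\,2^{-k}$ and bound the time $\tau_k$ for $|D|$ to pass from level $r_k$ to level $r_{k+1}$. Using jumps of size at most $r_k$, the process accumulates variance at rate comparable to $\eta_D(r_k)$, so an optional-stopping estimate for $D_t^2$ gives $\mathbb{E}[\tau_k]\lesssim r_k^2/\eta_D(r_k)$. Summing over $k$ and using that $\eta_D$ is increasing yields $\mathbb{E}[T]\lesssim\sum_k r_k^2/\eta_D(r_k)\lesssim\int_0^{|d|}\frac{r}{\eta_D(r)}\,dr\asymp\int_0^{|d|/2}\frac{r}{\eta(r)}\,dr$, which tends to $0$ as $d\to 0$ since it is the tail of the assumed convergent integral. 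Markov's inequality then gives $\mathbb{P}(T>\epsilon)\le\mathbb{E}[T]/\epsilon<\epsilon$ once $|d|$ is small, which is precisely the local coupling property.

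The step I expect to be the main obstacle is making the passage to $D_T=0$ rigorous together with the per-scale estimate. A symmetric pure-jump L\'evy process does not in general hit a point, so $0$ can be reached only through the accumulation of infinitely many scale crossings whose times sum to a finite limit, and it is exactly the finiteness of $\int_0^1\frac{r}{\eta(r)}\,dr$ that forces $\sum_k\tau_k<\infty$ and hence $D_{T^-}=0$, a genuine creeping to zero. Turning the heuristic ``variance rate $\approx\eta_D(r)$ at scale $r$'' into a clean upper bound for $\mathbb{E}[\tau_k]$ is the delicate part: it requires controlling the compensated small jumps through a maximal or martingale inequality and handling the occasional large jump that overshoots the target scale, so that the geometric sum of the per-scale bounds is indeed dominated by the stated integral.
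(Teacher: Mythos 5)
Your reduction to the symmetric part $\rho$ via Corollary~\ref{cor:sum_ind_Levy} is sound (it parallels the paper's reduction to $\nu$ supported in $[-1,1]$), but the coupling you build on it cannot work, and the obstacle you flag at the end is not a delicate technical step --- it is fatal. If $Y$ reflects \emph{every} jump of $X$, then $D_t=X_t-Y_t$ is itself a symmetric pure-jump L\'evy process (L\'evy measure the image of $\rho$ under $z\mapsto 2z$), and your coupling succeeds precisely when this L\'evy process hits the point $0$. Hitting points by a L\'evy process is governed by the Kesten--Bretagnolle criterion $\int_{\mathbb{R}}\mathrm{Re}\bigl(1/(1+\psi_D(\xi))\bigr)\,d\xi<\infty$, which is strictly stronger than the hypothesis of Theorem~\ref{thm:Levy_suffic}. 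Concretely, let $\rho$ be the symmetric $\alpha$-stable measure truncated to $[-1,1]$ with $0<\alpha\le 1$ (e.g.\ Cauchy, $\alpha=1$): then $\eta(r)\asymp r^{2-\alpha}$, so $\int_0^1 \frac{r}{\eta(r)}\,dr=\int_0^1 r^{\alpha-1}\,dr<\infty$ and the theorem applies; yet $\psi_D(\xi)\asymp|\xi|^{\alpha}$ for large $|\xi|$, the Kesten integral diverges, $\{0\}$ is polar for $D$, and so $T=\inf\{t:D_t=0\}=\infty$ almost surely for every starting difference $d\neq 0$. Your dyadic-scale argument cannot repair this: since all jumps are reflected, $D$ at scale $r_k$ still makes jumps of every size, so $|D|$ does not descend through scales but overshoots $0$ and is thrown back to larger scales. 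The variance estimate $\mathbb{E}[\tau_k]\lesssim r_k^2/\eta_D(r_k)$ is an \emph{exit-time} bound, and exit happens with overshoot; whether the ensuing returns and re-crossings ever produce an exact hit is exactly the polarity question, which the integral condition does not decide in your favor.

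The paper's coupling is designed precisely to convert this hitting-a-point problem into an exit-an-interval problem, by making the reflection \emph{state-dependent and partial}: decompose $\nu=(\nu-\frac12\rho)+\frac12\rho$, copy all jumps from the first component, and reflect a jump $u$ of the second component only when $|u|\le|X_{s-}-Y_{s-}|/2$ (this forces $Y$ to be defined through the SDE~\eqref{definition:Y_t}, and one must then verify, via the conditional characteristic function, that $Y$ is still a $(0,0,\nu)$ L\'evy process --- the factor $\frac12$ is what makes the jump intensities come out right). The payoff is that $Z=Y-X$ is a non-negative martingale whose jumps never exceed $Z_{s-}$, so $Z$ cannot jump over $0$: it leaves $(0,1)$ either by hitting $0$ exactly or by jumping to $[1,2]$. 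Your quantitative heuristic is then exactly what the paper's convex function $g(x)=\int_x^1\int_y^1\frac{1}{\eta(r)}\,dr\,dy=\int_x^1\frac{r}{\eta(r)}\,dr$ encodes: It\^o's formula, the convexity inequality of Lemma~\ref{lemma:g}, and optional stopping give $\frac14\mathbb{E}\bar\tau_a\le\mathbb{E}g(Z_{\bar\tau_a})-g(a)\to 0$ as $a\to 0$, while Doob's inequality bounds the probability of escaping upward by $a$. So the integral hypothesis is indeed used in the way you intend, but only after the coupling has been altered so that the difference process cannot cross zero; with full reflection, no amount of estimation recovers the theorem, as the stable example with $\alpha\le 1$ shows.
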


\begin{remark}
	In the special case that $\nu$ is symmetric, the above theorem has been implicitly proved in~\cite{Alishahi_Salavati}.
\end{remark}

In order to prove Theorem~\ref{thm:Levy_suffic} we provide appropriate couplings between two L\'evy processes $X_t$ and $Y_t$ with characteristics $(0,0,\nu)$ and starting respectively from 0 and $a$. Without loss of generality we assume $a>0$. If we denote the distribution of $X_1$ by $\mu$, then the distribution of $Y_1$ is $\mu_a(dx)=\mu(a+dx)$.

We denote the left limit of a cadlag process $X_t$ at $t$ by $X_{t-}$ and let $\Delta X_t = X_t - X_{t-}$.

Note that it suffices to prove for the case that $\nu$ is supported in $[-1,1]$, since by the L\'evy-It\"o representation we know that $X_t$ is the sum of two independent L\'evy processes $ \int_{|x|\le 1} x\tilde{N}(t,dx)$ and $\int_{|x|> 1} xN(t,dx)$ and if the former has the local coupling property then by Corollary~\ref{cor:sum_ind_Levy} so is $X_t$. Hence from now on we assume that $\nu$ is supported in $[-1,1]$.

Let $L_1(t)$ and $L_2(t)$ be two independent L\`evy processes with characteristics  $(0,0,\nu-\frac{1}{2}\rho)$ and $(0,0,\frac{1}{2}\rho)$ and let $\tilde{N}_1(dt,du)$ and $\tilde{N}_2(dt,du)$ be the corresponding compensated Poisson random measures (cPrms). Let $\mathcal{F}_t$ be the filtration generated by $N_1$ and $N_2$ and define
\[ X_t = L_1(t) + L_2(t) \]
It is clear that $X_t$ is a L\`evy process with characteristics $(0,0,\nu)$. Now consider the following stochastic differential equation
\begin{equation}\label{definition:Y_t}
Y_t = a+ \int_0^t f(s,Y_{s-},\omega) dX_s 
\end{equation}
where $f:\mathbb{R}^+\times \mathbb{R}\times \Omega \to \mathbb{R}$ is defined by
\[f(s,y,\omega ) = \left\{ {\begin{array}{*{20}{c}}
  { - 1}&{\frac{{{X_{s - }} - y}}{2} > \,\left| {\Delta {L_2}(s)} \right|>0} \\ 
  1&{otherwise} 
\end{array}} \right.\]

Notice that by equation~\eqref{definition:Y_t}, the jumps of $X$ and $Y$ occur at the same times and have the same magnitude but probably different directions.

Note that the classical existence theorems for solutions of SDEs are not applicable to equation~\eqref{definition:Y_t} since $f$ is not a Lipschitz (not even continuous) function of $y$. In order to prove the existence of solution, we rewrite~\eqref{definition:Y_t} as an SDE with respect to cPrms as follows,

\begin{equation*}
Y_t = a + \int_0^t \int_\mathbb{R} u \tilde{N}_1(ds,du) \\
+ \int_0^t \int_\mathbb{R}  g(s,Y_{s-},u,\omega) u \tilde{N}_2(ds,du)
\end{equation*}
where
\[ g(s,y,u,\omega) = \chi_{\frac{|X_{s-}-y|}{2}<|u|} - \chi_{\frac{|X_{s-}-y|}{2}\ge |u|}  \]
Now we can use the existence result for such equations (see e.g. Applebaum~\cite{Applebaum}, Theorem 6.2.3). For that, we need to prove linear growth and Lipschitz conditions for coefficients. To prove this, note that both coefficients are almost surely bounded by $|u|$. Hence the Lipschitz condition reduces to $\int_\mathbb{R} |u|^2 \nu (du) < \infty$ which holds by L\'evy condition and the assumption that $\nu$ is supported in $[-1,1]$. The linear growth condition is similar. Hence the equation has a square integrable and cadlag solution.

Now we claim that
\begin{lemma}
	$Y_t$ is a L\`evy processes with characteristics $(0,0,\nu)$ and starting from $a$.
\end{lemma}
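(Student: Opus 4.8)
The plan is to show that $Y_t$ has the same law as a Lévy process with triplet $(0,0,\nu)$ started from $a$, and the natural route is through the generator (equivalently, the characteristic function). I would first argue that $Y_t$ has stationary independent increments, then compute the characteristic exponent and check that it matches that of $X_t$.

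\medskip

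\textbf{Step 1: Jumps of $Y$ match those of $X$ in size.}
First I would extract from the SDE the pathwise structure of the jumps. Since $Y_t = a + \int_0^t f(s,Y_{s-},\omega)\,dX_s$ and $X$ is a pure-jump process driven by $N_1,N_2$, the only times $Y$ jumps are the jump times of $X$, and at such a time $s$ we have $\Delta Y_s = f(s,Y_{s-},\omega)\,\Delta X_s$. Because $f\in\{-1,+1\}$, this gives $|\Delta Y_s| = |\Delta X_s|$, so each jump of $Y$ has the same magnitude as the corresponding jump of $X$ but possibly the opposite sign. The content of the claim is that after reassigning signs according to the rule encoded in $g$, the resulting jump measure still has intensity $dt\,\nu(dx)$.

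\medskip

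\textbf{Step 2: Identify the compensated-Poisson form and read off the intensity.}
The cleanest way is to work from the equivalent representation
\[
Y_t = a + \int_0^t\!\!\int_\mathbb{R} u\,\tilde N_1(ds,du) + \int_0^t\!\!\int_\mathbb{R} g(s,Y_{s-},u,\omega)\,u\,\tilde N_2(ds,du).
\]
Here $N_1$ has intensity $dt\,(\nu-\tfrac12\rho)(du)$ and $N_2$ has intensity $dt\,\tfrac12\rho(du)$, and $g\in\{-1,+1\}$. I would use the key symmetry of $\rho=\nu\wedge\bar\nu$: by construction $\rho$ is symmetric, i.e.\ $\rho(du)=\rho(-du)$. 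Consequently, flipping the sign $u\mapsto g(\cdot)u$ inside the $N_2$-integral does not change the law of the jump contribution, because the intensity $\tfrac12\rho(du)$ is invariant under $u\mapsto -u$. The plan is to verify, via the exponential/Lévy martingale formula applied to $u\mapsto e^{i\xi u}$, that the predictable compensator of $e^{i\xi Y_t}$ has exponent
\[
\int_\mathbb{R}\big(e^{i\xi u}-1-i\xi u\big)\,(\nu-\tfrac12\rho)(du) + \int_\mathbb{R}\big(e^{i\xi u}-1-i\xi u\big)\,\tfrac12\rho(du),
\]
where in the second term the symmetry of $\rho$ lets us replace the sign-flipped integrand by the unflipped one. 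The two integrals combine to the characteristic exponent of $\nu$.

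\medskip

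\textbf{Step 3: Stationary independent increments and conclusion.}
To upgrade this to the full Lévy property I would apply the Lévy-type martingale characterization (Itô's formula for $e^{i\xi Y_t}$ using the SDE): the computation in Step 2 shows that $e^{i\xi Y_t} - e^{i\xi a} - \int_0^t \psi(\xi)\,e^{i\xi Y_{s}}\,ds$ is a martingale, where $\psi(\xi)$ is the characteristic exponent of $(0,0,\nu)$. Since $\psi$ does not depend on $Y_{s-}$ or on $\omega$ (this is precisely where the symmetry of $\rho$ is used to kill the $Y_{s-}$-dependence coming from $g$), solving this gives $\mathbb{E}[e^{i\xi(Y_t-Y_s)}\mid\mathcal F_s]=e^{(t-s)\psi(\xi)}$, which is exactly the statement that $Y$ has stationary independent increments with exponent $\psi$, i.e.\ $Y$ is Lévy with triplet $(0,0,\nu)$ started at $a$.

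\medskip

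\emph{The main obstacle} is Step 2: one must confirm that the $Y_{s-}$-dependent (hence random and non-Lévy-looking) integrand $g(s,Y_{s-},u,\omega)$ genuinely integrates out to something independent of the past. This hinges entirely on the symmetry $\rho=\bar\rho$, which ensures that whether a given $\rho$-jump is accepted with sign $+1$ or flipped to $-1$, the contribution to the characteristic exponent is identical. Justifying the interchange rigorously—showing the compensator of $e^{i\xi Y_t}$ is deterministic despite the predictable sign choice—is the delicate point, and I would handle it by verifying that for each fixed $(s,\omega)$ the map $u\mapsto g(s,Y_{s-},u,\omega)u$ is a measure-preserving (up to sign) transformation of $\tfrac12\rho$, so that the compensated integral's angle bracket and exponent are unaffected.
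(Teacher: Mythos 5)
Your proposal is correct and takes essentially the same route as the paper: Itô's formula applied to $e^{i\xi Y_t}$, with the symmetry of $\rho$ together with the evenness of $u\mapsto g(s,Y_{s-},u,\omega)$ (values $\pm 1$) used to show the compensator is deterministic and equals the exponent $\psi(\xi)$ of $(0,0,\nu)$. The paper phrases your Step 3 as the ODE $h'(t)=\psi(\xi)h(t)$ for $h(t)=\mathbb{E}(e^{i\xi Y_t}\mid\mathcal{F}_s)$, which yields the same conclusion.
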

\begin{proof}
The idea is that $Y_t$ and $X_t$ has the same movements except that $Y$ sometimes jumps in the opposite direction than $X$, but notice that if we decompose $\nu$ as $(\nu-\rho)+\rho$, the jumps of $X$ that come from the $\rho$ part have a symmetric distribution and at exactly these jumps $Y$ makes an opposite jump with probability $\frac{1}{2}$.

In order to give a rigorous proof, we calculate the conditional characteristic function of $Y_t$, i.e. $\mathbb{E}(e^{i\xi Y_t}|\mathcal{F}_s)$.

We have
\[ Y_t = a + \int_0^t \int_\mathbb{R} u \tilde{N}_0(ds,du) \\
+\int_0^t \int_\mathbb{R} u \tilde{N}_3(ds,du)+ \int_0^t \int_\mathbb{R}  g(s,Y_{s-},u,\omega) u \tilde{N}_2(ds,du) \]

We write the It\^o's formula for $e^{i\xi Y_t}$,
\begin{multline*}
e^{i\xi Y_t} - e^{i\xi Y_s} = \int_s^t \int_\mathbb{R} \left( e^{i\xi Y_{r-} + i\xi u} - e^{i \xi Y_{r-}}\right) \tilde{N}_1(dr , du)\\
+ \int_s^t \int_\mathbb{R} \left( e^{i\xi Y_{r-} + i\xi g(r,Y_{r-},u) u} - e^{i \xi Y_{r-}}\right) \tilde{N}_2(dr , du)\\
+ \int_s^t \int_\mathbb{R}  e^{i\xi Y_{r-}} [e^{i\xi u}-1- i\xi u] (\nu-\frac{1}{2}\rho)(du)dr\\
+ \int_s^t \int_\mathbb{R} [e^{i \xi g(r,Y_{r-},u) u} - 1 - i\xi g(r,Y_{r-},u) u]\frac{1}{2}\rho(du)dr
\end{multline*}
taking expectations conditioned on $\mathcal{F}_s$ and noting that the first two integrals are martingales we find,
\begin{multline} \label{eq:proof_characteristic_1}
\mathbb{E}(e^{i\xi Y_t}|\mathcal{F}_s) - e^{i\xi Y_s} =
\int_s^t  \mathbb{E} \bigg(  e^{i\xi Y_{r-}} \int_\mathbb{R} \Big( [e^{i\xi u}-1- i\xi u](\nu-\frac{1}{2}\rho)(du)\\
+ [e^{i \xi g(r,Y_{r-},u) u} - 1 - i\xi g(r,Y_{r-},u) u] \frac{1}{2}\rho(du) \Big)  |\mathcal{F}_s \bigg)  dr
\end{multline}
Now note that $\rho$ is a symmetric measure and for each $\omega$, the function $u\mapsto g(r,Y_{r-},u,\omega)$ is an even function with values $\pm 1$, hence we have
\[ \int_\mathbb{R}  \left( e^{i \xi g(r,Y_{r-},u) u} - 1 - i\xi g(r,Y_{r-},u) u \right) \rho(du) = \int_\mathbb{R}  \left( e^{ i\xi u}-1 -i\xi u \right) \rho(du) \]

Substituting in~\eqref{eq:proof_characteristic_1} we find,
\begin{multline*}
\mathbb{E}(e^{i\xi Y_t}|\mathcal{F}_s) - e^{i\xi Y_s} = 
\int_s^t  \mathbb{E} \bigg(  e^{i\xi Y_{r-}} \int_\mathbb{R}  [e^{i\xi u}-1- i\xi u] \nu(du) |\mathcal{F}_s \bigg) dr \\
=\psi(\xi) \int_s^t \mathbb{E} \left( e^{i\xi Y_{r-}} |\mathcal{F}_s \right)dr
\end{multline*}

where $\psi(\xi)=\int_\mathbb{R}  [e^{i\xi u}-1- i\xi u] \nu(du)$. Note that $\psi(\xi)$ is indeed the characteristic exponent of $X$.

Hence if we define $h(t)=\mathbb{E}(e^{i\xi Y_t}|\mathcal{F}_s)$, $h$ satisfies the ordinary differential equation $h^\prime(t)=\psi(\xi)h(t)$. This ode has the unique solution
\[ \mathbb{E}(e^{i\xi Y_t}|\mathcal{F}_s) = e^{i\xi Y_s} e^{(t-s)\psi(\xi)} .\]
The last equality implies easily that $Y$ is a L\'evy process with characteristics $(0,0,\nu)$.
\end{proof}

Now let
\[ Z_t = Y_t - X_t \]
By subtracting the integral representations of $X_t$ and $Y_t$, we find that $Z_t$ satisfies the following sde,
\[ Z_t = -2 \int_0^t \int_\mathbb{R} \chi_{\frac{|Z_{s-}|}{2}\ge |u|} u \tilde{N}_2(ds,du) \]
It is clear from the above equation that the jumps of $Z_t$ always have magnitudes less than $|Z_t|$. Hence since $Z_0=a>0$, $Z_t$ is always non-negative.

Now we define two stopping times
\[ \tau_a = \inf \{ t: Z_t = 0 \}, \]
\[ \bar{\tau}_a = \inf \{ t: Z_t \notin (0,1) \}. \]
Since the jumps of $Z_t$ satisfy $| \Delta Z_s| \le Z_{s^-}$ it is clear that $Z_{\bar{\tau}_a} \le 2$.
Note also that since $\rho$ is symmetric, the jumps of $Z_t$ have a symmetric distribution.
\begin{lemma}\label{lemma:limit}
\[ \lim_{t\to\infty} Z_t =0,\quad a.s. \]
\end{lemma}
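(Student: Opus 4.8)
The plan is to exploit the martingale structure of $Z_t$ together with the martingale convergence theorem, and then to rule out a strictly positive limit by a quadratic-variation argument driven by $\eta$. First I would observe that $Z_t$ is a non-negative martingale: it is the stochastic integral of the predictable integrand $-2\chi_{|Z_{s-}|/2\ge|u|}u$ against $\tilde{N}_2$, whose intensity is $ds\,\tfrac12\rho(du)$, and since $\rho$ is supported in $[-1,1]$ the integrand is square-integrable on each finite interval,
\[ \mathbb{E}\int_0^t\int_{\mathbb{R}} 4\chi_{|Z_{s-}|/2\ge|u|}u^2\,\tfrac12\rho(du)\,ds \;\le\; 2t\int_{\mathbb{R}} u^2\rho(du) \;<\;\infty . \]
Hence the integral is a genuine square-integrable martingale on $[0,t]$ for every $t$. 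Non-negativity of $Z$ has already been established, so $Z_t$ is a non-negative (super)martingale, and the martingale convergence theorem produces a random variable $Z_\infty\ge 0$ with $Z_t\to Z_\infty$ almost surely.

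It then remains to identify $Z_\infty=0$, and the key tool is the predictable quadratic variation. Using the symmetry of $\rho$ to write $\int_{|u|\le r}u^2\rho(du)=2\eta(r)$ and the definition of $\eta$, a direct computation gives
\[ \langle Z\rangle_t \;=\; \int_0^t\int_{\mathbb{R}} 4\chi_{|Z_{s-}|/2\ge|u|}u^2\,\tfrac12\rho(du)\,ds \;=\; 4\int_0^t \eta\!\left(\tfrac{Z_{s-}}{2}\right)ds . \]

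The main technical point I expect to be the crux is to upgrade the finite-interval integrability to $\langle Z\rangle_\infty<\infty$ almost surely, since $Z$ need not be globally $L^2$-bounded. To this end I would note that convergence of $Z_t$ forces $Z^{*}:=\sup_{t\ge 0}Z_t<\infty$ a.s., and localise at $\sigma_n=\inf\{t:Z_t>n\}$. By the jump bound $|\Delta Z_s|\le Z_{s-}$ the stopped martingale $Z_{t\wedge\sigma_n}$ is bounded by $2n$, hence $L^2$-bounded; from $\mathbb{E}[(Z_{t\wedge\sigma_n})^2]=a^2+\mathbb{E}[\langle Z\rangle_{t\wedge\sigma_n}]$ together with monotone convergence one obtains $\mathbb{E}[\langle Z\rangle_{\sigma_n}]\le 4n^2<\infty$, so $\langle Z\rangle_{\sigma_n}<\infty$ a.s. On the event $\{Z^{*}<n\}$ one has $\sigma_n=\infty$ and therefore $\langle Z\rangle_\infty<\infty$; letting $n\to\infty$ over $\{Z^{*}<\infty\}$ yields $\langle Z\rangle_\infty<\infty$ almost surely.

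Finally I would argue by contradiction: suppose $\mathbb{P}(Z_\infty>0)>0$. On the event $\{Z_\infty>0\}$ set $c=Z_\infty/2>0$; since $Z_{s-}\to Z_\infty$ there is a random time beyond which $Z_{s-}\ge c$, and because $\eta$ is non-decreasing with $\eta(r)>0$ for every $r>0$ (which is forced by the standing assumption $\int_0^1 \frac{r}{\eta(r)}\,dr<\infty$, for otherwise $\eta$ would vanish on an interval and make the integral diverge), the representation of $\langle Z\rangle_t$ gives $\langle Z\rangle_\infty\ge 4\eta(c/2)\cdot\infty=\infty$ on that event. This contradicts the previous step, so $Z_\infty=0$ almost surely. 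The concluding contradiction is routine once positivity of $\eta$ is in hand; the delicate part is the localisation establishing finiteness of the quadratic variation, rather than its mere boundedness on bounded time intervals.
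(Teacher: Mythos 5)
Your proof is correct, but its second half takes a genuinely different route from the paper's. Both arguments begin identically: $Z$ is a non-negative martingale, so the martingale convergence theorem yields an a.s.\ limit $Z_\infty$. To rule out $Z_\infty>0$, the paper argues pathwise and qualitatively: the hypothesis $\int_0^1 r/\eta(r)\,dr<\infty$ forces $\eta(r)>0$ for every $r>0$, hence $\rho$ charges every interval $(0,\epsilon)$; as long as $Z_{s-}$ stays above $Z_\infty/2>0$, the coupling mechanism keeps producing jumps of $Z$ of magnitude bounded below at a positive Poisson rate, and infinitely many such jumps contradict convergence of the path --- no second moments or localization enter. You instead make the argument quantitative: you compute the predictable quadratic variation $\langle Z\rangle_t=4\int_0^t\eta(Z_{s-}/2)\,ds$, prove $\langle Z\rangle_\infty<\infty$ a.s.\ by localizing at the level-crossing times $\sigma_n$ (the jump bound $|\Delta Z_s|\le Z_{s-}$ keeping the stopped martingale bounded by $2n$, exactly the device the paper uses elsewhere to get $Z_{\bar{\tau}_a}\le 2$), and observe that a strictly positive limit would force the integrand, hence $\langle Z\rangle_\infty$, to be infinite. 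Both proofs rest on the same elementary observation that the standing hypothesis forces $\eta>0$ on $(0,\infty)$. The paper's route is softer and shorter; yours costs the isometry-plus-localization step (which you correctly identify as the crux and execute properly), but it buys a stronger conclusion --- $\int_0^\infty\eta(Z_{s-}/2)\,ds<\infty$ a.s.\ --- and an explicit formula for $\langle Z\rangle$ in terms of $\eta$ that closely parallels the computation carried out later in the proof of Lemma~\ref{lemma:tau_bar}.
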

\begin{proof}
Since $X_t$ and $Y_t$ are martingales hence so is $Z_t$. Moreover, $Z_t$ is non-negative and hence by martingale convergence theorem it has a limit $Z_\infty$, as $t\to\infty$. On the other hand, by the assumption made on $\eta$, for any $\epsilon>0$ we have $\nu((0,\epsilon))>0$. Hence jumps of size greater than $\epsilon$ occur in $X_t$ with a positive rate. This implies that if $Z_\infty=\alpha \ne 0$, then $Z_t$ has infinitely many jumps greater than some $\epsilon$ with $0<\epsilon<\frac{\alpha}{2}$ which contradicts its convergence. Hence $Z_\infty = 0\quad a.s$.
\end{proof}

We define an auxiliary function $g:[0,\infty)\to\mathbb{R}$ by letting
\[ g(x) = \int_x^1 \int_y^1 \frac{1}{\eta(r)} dr dy \]

\begin{lemma}\label{lemma:g}
If $\int_0^1 \frac{r}{\eta(r)} dr <\infty$ then $g$ is defined on $[0,\infty)$. Moreover, it is differentiable on $(0,\infty)$ and its derivative is absolutely continuous and $g^{\prime\prime} (x)= \frac{1}{\eta(x)}$ for almost every $x$. Furthermore, for every $x,y\in[0,\infty)$, we have
\[ g(y)-g(x) \ge g^\prime(x) (y-x) + \frac{1}{2} \frac{1}{\eta(x)} (y-x)^2 1_{y<x} \]
\end{lemma}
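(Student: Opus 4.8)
The plan is to read $g$ as a convex function on $(0,\infty)$ with $g''=1/\eta$, and to read the claimed bound as a one-sided \emph{strong}-convexity estimate that exploits the monotonicity of $\eta$; the quadratic gain survives only on the side $y<x$ because only there can $\eta$ be compared with $\eta(x)$.

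First I would settle well-definedness. Since $1/\eta$ is nonnegative, Tonelli's theorem lets me swap the order of integration in
\[ g(0) = \int_0^1 \int_y^1 \frac{1}{\eta(r)}\,dr\,dy = \int_0^1 \frac{1}{\eta(r)} \int_0^r dy\,dr = \int_0^1 \frac{r}{\eta(r)}\,dr, \]
which is finite by hypothesis. The same hypothesis forces $\eta(r)>0$ for every $r>0$: as $\eta$ is nondecreasing and nonnegative, $\eta(r_0)=0$ would make $r/\eta(r)\equiv+\infty$ on $(0,r_0]$ and contradict finiteness. Hence $1/\eta$ is finite on $(0,\infty)$ and locally bounded away from $0$, so $g$ is finite on all of $[0,\infty)$ (for $x\ge 1$ the reversed integrals give the nonnegative quantity $\int_1^x\int_1^y \eta(r)^{-1}\,dr\,dy$, finite since $\eta\ge\eta(1)>0$ there).

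Next, differentiability. Writing $G(y)=\int_y^1 \eta(r)^{-1}\,dr$ we have $g(x)=\int_x^1 G(y)\,dy$, and $G$ is continuous on $(0,\infty)$ because $1/\eta$ is locally integrable there; thus $g\in C^1(0,\infty)$ with $g'(x)=-G(x)=\int_1^x \eta(r)^{-1}\,dr$. Local integrability of $1/\eta$ makes $g'$ locally absolutely continuous on $(0,\infty)$, and the fundamental theorem of calculus gives $g''(x)=1/\eta(x)$ at every Lebesgue point of $1/\eta$, i.e.\ almost everywhere; in particular $g$ is convex, so $g'$ is nondecreasing. Moreover $G$ is integrable on $(0,1)$ by the first step, so the representation $g(y)-g(x)=\int_x^y g'(s)\,ds$ holds for all $x,y\in[0,\infty)$. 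Fixing $x>0$ and subtracting $g'(x)(y-x)=\int_x^y g'(x)\,ds$ yields the exact identity
\[ g(y)-g(x)-g'(x)(y-x)=\int_x^y\big(g'(s)-g'(x)\big)\,ds. \]
If $y\ge x$ this is $\ge 0$ since $g'(s)\ge g'(x)$ for $s\ge x$, and as $\mathbf 1_{y<x}=0$ this is exactly the claim. If $y<x$, then using $g'(x)-g'(s)=\int_s^x \eta(u)^{-1}\,du$ and Tonelli,
\[ \int_x^y\big(g'(s)-g'(x)\big)\,ds=\int_y^x \frac{u-y}{\eta(u)}\,du\ge \frac{1}{\eta(x)}\int_y^x (u-y)\,du=\frac{(x-y)^2}{2\,\eta(x)}, \]
the inequality being the one-sided strong-convexity step: for $u\le x$ monotonicity of $\eta$ gives $\eta(u)\le\eta(x)$, hence $\eta(u)^{-1}\ge\eta(x)^{-1}$. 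Since $(x-y)^2=(y-x)^2$ this is the asserted bound, and the case $x=0$ holds trivially (if $g'(0^+)=-\infty$ the right side is $-\infty$, otherwise the same computation applies).

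The main obstacle, and essentially the only place care is required, is the behaviour near $0$: both $1/\eta$ and $g'$ may blow up as their argument tends to $0$, so I must check that every integral above converges and that the representation $g(y)-g(x)=\int_x^y g'$ and the Tonelli interchange survive when $0$ is an endpoint. Each of these reduces to the single integrability fact $\int_0^x u/\eta(u)\,du<\infty$ coming from the hypothesis, which dominates the relevant integrands. The asymmetry between the two cases—the quadratic gain only when $y<x$—is precisely the reason $\eta$ can be bounded by $\eta(x)$ on $[y,x]$ but not on $[x,y]$, and is exactly what the indicator $\mathbf 1_{y<x}$ records.
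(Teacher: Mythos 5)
Your proof is correct and follows essentially the same route as the paper: Tonelli/Fubini to reduce finiteness of $g$ to the hypothesis $\int_0^1 r/\eta(r)\,dr<\infty$, convexity of $g$ (monotone $g'$) for the case $y\ge x$, and a second-order integral remainder bounded below via $\eta(u)\le\eta(x)$ on $[y,x]$ for the case $y<x$. The only cosmetic difference is that the paper invokes the integral form of Taylor's theorem where you derive the same identity directly by Tonelli, and you are somewhat more careful about positivity of $\eta$ and the endpoint $x=0$.
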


\begin{proof}
By Fubini's theorem,
\[ g(x) = \int_x^1 \int_x^r \frac{1}{\eta(r)} dy dr = \int_x^1 \frac{r}{\eta(r)} dr \]
hence $g$ is finite and differentiable everywhere and its derivative is absolutely continuous and $g^{\prime\prime} = \frac{1}{\eta} \quad a.e $.

To prove the last claim, note that since $g^\prime=-\int_x^1 \frac{1}{\eta}$ is increasing hence $g$ is convex and therefore,
\[ g(y)-g(x) \ge g^\prime(x) (y-x) \]
If $y<x$, by the integral form of the Taylor's remainder theorem we have
\[ g(y)-g(x) = g^\prime(x) (y-x) + \int_y^x (t-y) g^{\prime\prime}(t) dt \]
where since $g^{\prime\prime}(t)$ is decreasing we have $g^{\prime\prime}(t) \ge \frac{1}{\eta(x)}$ which implies that in the case that $y<x$,
\[ g(y)-g(x) \ge g^\prime(x) (y-x) + \frac{1}{2} \frac{1}{\eta(x)} (y-x)^2\]
and the proof is complete.
\end{proof}

\begin{lemma}\label{lemma:tau_bar}
If $\int_0^1 \frac{r}{\eta(r)} dr <\infty$ then
\[ \lim_{a\to 0} \mathbb{E} \bar{\tau}_a = 0\]
\end{lemma}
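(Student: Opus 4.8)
The plan is to use the convex function $g$ from Lemma~\ref{lemma:g} as a Lyapunov function and to convert a lower bound on its ``generator'' into an upper bound on $\mathbb{E}\bar\tau_a$ by an optional-stopping (Dynkin) argument. Recall that $Z_t$ is the pure-jump martingale solving
\[ Z_t = -2\int_0^t\int_\mathbb{R}\chi_{|Z_{s-}|/2\ge|u|}\,u\,\tilde N_2(ds,du), \]
so at a jump of $N_2$ of size $u$ with $|u|\le Z_{s-}/2$ the process jumps by $-2u$ and otherwise stays put. First I would apply It\^o's formula to $g(Z_t)$. Since $Z$ has no continuous martingale part, the formula uses only the jumps, and after compensating with the intensity $\tfrac12\rho(du)\,ds$ it reads
\[ g(Z_t)=g(a)+M_t+\int_0^t \mathcal{A}g(Z_s)\,ds,\qquad \mathcal{A}g(z)=\int_\mathbb{R}\Big[g\big(z-2u\,\chi_{|u|\le z/2}\big)-g(z)\Big]\tfrac12\rho(du), \]
with $M_t$ a local martingale.

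The heart of the argument is a pointwise bound $\mathcal{A}g(z)\ge c>0$ for $z\in(0,1)$. Here I would exploit the symmetry of $\rho$: pairing the jump coming from $u$ with the one coming from $-u$, the first-order (convexity) contributions combine into $g'(z)\int_\mathbb{R}(-2u\,\chi_{|u|\le z/2})\tfrac12\rho(du)=0$, since the integrand is odd and $\rho$ is symmetric. For the left jumps $z\mapsto z-2u$ (with $0<u\le z/2$, so the image lies below $z$) I would invoke the refined inequality of Lemma~\ref{lemma:g}, namely $g(y)-g(x)\ge g'(x)(y-x)+\tfrac12\tfrac1{\eta(x)}(y-x)^2\mathbf 1_{y<x}$, with $x=z$, $y=z-2u$. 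Integrating the surviving quadratic term against $\tfrac12\rho$, and using $\eta(r)=\int_0^r x^2\rho(dx)$ together with $g''=1/\eta$, produces the constant lower bound for $\mathcal{A}g$ on $(0,1)$.

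With this bound I would stop at $\bar\tau_a$. Before $\bar\tau_a$ the process stays in $(0,1)$, and since $|\Delta Z_s|\le Z_{s-}$ one has $Z_{\bar\tau_a}\le 2$; hence $Z_{t\wedge\bar\tau_a}$ lives in the compact set $[0,2]$, on which $g$ is continuous and bounded. Thus $g(Z_{t\wedge\bar\tau_a})$ is bounded, so (after a localization to control the compensated integral near $0$, where $g'$ may blow up) the stopped local martingale has zero expectation. Taking expectations in the It\^o decomposition gives
\[ c\,\mathbb{E}[t\wedge\bar\tau_a]\le \mathbb{E}\!\int_0^{t\wedge\bar\tau_a}\!\mathcal{A}g(Z_s)\,ds=\mathbb{E}\,g(Z_{t\wedge\bar\tau_a})-g(a)\le g(0)-g(a), \]
using $g\le g(0)$ everywhere. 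Letting $t\to\infty$ by monotone convergence yields $c\,\mathbb{E}\bar\tau_a\le g(0)-g(a)=\int_0^a \tfrac{r}{\eta(r)}\,dr$, and letting $a\to0$ the right-hand side tends to $0$ by the finiteness hypothesis and the continuity of $g$ at $0$, which proves $\lim_{a\to0}\mathbb{E}\bar\tau_a=0$.

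I expect the main obstacle to be extracting a genuinely uniform positive constant $c$ from $\mathcal{A}g$: the symmetry cancellation is clean, but the quadratic estimate only controls jumps of size up to $z$, so one must track the factor-of-two scaling between the jump sizes ($2u$ with $|u|\le z/2$) and the definition of $\eta$ to ensure the resulting lower bound does not degenerate as $z\to0$. A secondary technical point is justifying $\mathbb{E}M_{t\wedge\bar\tau_a}=0$ by localization, since the increments of $g(Z)$ need not be bounded near $0$; stopping $Z$ before it enters a neighbourhood of $0$ and passing to the limit handles this.
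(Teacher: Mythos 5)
Your proposal is the paper's own Lyapunov argument recast in generator form, but the step you yourself flag as ``the main obstacle'' is a genuine gap, and with $g$ built from $\eta(r)=\int_0^r x^2\rho(dx)$ the needed bound is actually false. Carrying out the computation you describe: the marks that act satisfy $|u|\le z/2$ but produce jumps of size $2u$, so pairing $\pm u$ and applying Lemma~\ref{lemma:g} to the downward jump gives
\[ \mathcal{A}g(z)\;\ge\;\int_{0<u\le z/2}\frac{1}{2}\,\frac{(2u)^2}{\eta(z)}\cdot\frac{1}{2}\,\rho(du)\;=\;\frac{\eta(z/2)}{\eta(z)}, \]
a ratio, not a uniform constant. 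The hypothesis $\int_0^1 r/\eta(r)\,dr<\infty$ does not keep this ratio away from $0$: take $\rho$ purely atomic with atoms at $\pm y_k$, $y_k=2^{-3^k}$, and weights chosen so that $\eta\equiv S_k:=k^2y_{k-1}^2$ on $(y_k,y_{k-1}]$. Then $\int_0^1 r/\eta(r)\,dr\le\sum_k y_{k-1}^2/(2S_k)=\sum_k 1/(2k^2)<\infty$, yet at the scales $z=2y_k$ one has $\eta(z/2)/\eta(z)=S_{k+1}/S_k\to 0$. So no pointwise constant $c>0$ exists for this $g$, and the proof as written fails at its central step.

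The cure is exactly the factor-of-two bookkeeping you mention but never do: the Lyapunov function must be matched to the jumps of $Z$ (which are $2u$), not to the marks $u$. Build $g$ from $\tilde\eta(r):=4\eta(r/2)$, the second-moment function of the jump distribution of $Z$; the hypothesis is unchanged, since $\int_0^1 r/\tilde\eta(r)\,dr=\int_0^{1/2}s/\eta(s)\,ds$. Then the same computation gives
\[ \int_{0<u\le z/2}\frac{1}{2}\,\frac{(2u)^2}{\tilde\eta(z)}\cdot\frac{1}{2}\,\rho(du)\;=\;\frac{\eta(z/2)}{4\,\eta(z/2)}\;=\;\frac{1}{4}, \]
identically in $z$: the constant is exact by construction, with no ratio left over. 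This exact cancellation is what powers the paper's proof, where it appears not as a pointwise generator bound but as the compensator identity $\mathbb{E}\sum\frac{1}{\eta(Z_{s-})}(\Delta Z_s)^2 1_{\Delta Z_s<0}=\frac{1}{2}\mathbb{E}(t\wedge\bar\tau_a)$; the weight $1/\eta(Z_{s-})$ cancels the compensator, so no lower bound on any ratio is ever needed. (As printed, the paper's identity carries the same factor-of-two slip and is likewise repaired by using $\tilde\eta$.) Once the constant is secured, the rest of your argument goes through, and your ending is in fact slightly cleaner than the paper's: ``$g\le g(0)$ on the relevant range $[0,2]$'' is correct but needs one line on $[1,2]$, where $g$ is increasing --- since $\rho$ is supported in $[-1,1]$ one gets $g(x)\le\frac{(x-1)^2}{2}\sup_{[1,x]}g''\le g(0)$ there --- and this replaces the paper's separate estimate $\mathbb{P}(Z_{\bar\tau_a}\ge 1)\le\mathbb{E}Z_{\bar\tau_a}=a$ via optional stopping.
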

\begin{proof}
We have by Lemma~\ref{lemma:g},
\begin{multline}
	g(Z_{t \wedge \bar{\tau}_a}) = g(a) + \sum_{s\le t\wedge \bar{\tau}_a} \Delta g(Z_s)\\
	\ge g(a) + \sum_{s\le t\wedge \bar{\tau}_a} g^\prime(Z_{s-})\Delta Z_s + \frac{1}{2} \sum_{s\le t\wedge \bar{\tau}_a} \frac{1}{\eta(Z_{s-})}  (\Delta Z_s)^2 1_{\Delta Z_s<0}
\end{multline}

Since the second term on the right hand side is a martingale, we have
\begin{equation} \label{equation:Ito's formula2}
\mathbb{E} g(Z_{t \wedge \bar{\tau}_a}) \ge g(a) +  \frac{1}{2} \mathbb{E} \sum_{s\le t\wedge \bar{\tau}_a} \frac{1}{\eta(Z_{s-})}  (\Delta Z_s)^2 1_{\Delta Z_s<0}.
\end{equation}

Noting that the jumps of $Z_s$ are independent and have symmetric distribution, we conclude
\[  \mathbb{E} \sum_{s\le t\wedge \bar{\tau}_a} \frac{1}{\eta(Z_{s-})}  (\Delta Z_s)^2 1_{\Delta Z_s<0} = \frac{1}{2} \mathbb{E} \int_0^{t\wedge \bar{\tau}_a} \frac{1}{\eta(Z_{s-})} \eta(Z_s) ds =  \frac{1}{2} \mathbb{E} (t \wedge \bar{\tau}_a)\]

Substituting in~\eqref{equation:Ito's formula2} implies

\[	\frac{1}{4} \mathbb{E} (t \wedge \bar{\tau}_a) \le \mathbb{E}g(Z_{t \wedge \bar{\tau}_a}) - g(a) \]

Now letting $t\to \infty$ and noting that $Z_s$ is uniformly bounded by 2 for $t\le \bar{\tau}_a$, we find that
\[	\frac{1}{4} \mathbb{E} (\bar{\tau}_a) \le \mathbb{E}g(Z_{\bar{\tau}_a}) - g(a) \]
Now let $a\to 0$. By continuity of $g$, we have $g(a)\to g(0)$. On the other hand, \[ \mathbb{P}(Z_{\bar{\tau}_a}\ge 1) \le \mathbb{E}(Z_{\bar{\tau}_a}) = a\to 0 \]
where we have used the optional stopping theorem. Now we can write,
\begin{multline}
\mathbb{E}g(Z_{\bar{\tau}_a}) = \mathbb{E}g(Z_{\bar{\tau}_a}; Z_{\bar{\tau}_a}=0) + \mathbb{E}g(Z_{\bar{\tau}_a};Z_{\bar{\tau}_a}\ge 1) \\
\le g(0) + 2 \mathbb{P}(Z_{\bar{\tau}_a}\ge 1) \to g(0)
\end{multline}
Hence we find that,
\[ \lim_{a\to 0} \mathbb{E} \bar{\tau}_a = 0\]
\end{proof}

We are now ready to prove Theorem~\ref{thm:Levy_suffic}.
\begin{proof}[Proof of Theorem~\ref{thm:Levy_suffic}]
It suffices to prove that for any $\epsilon>0$,
\[\lim_{a\to 0} \mathbb{P} (\tau_a\ge \epsilon) = 0.\]
We have
\[ \mathbb{P} (\tau_a\ge \epsilon) \le \mathbb{P} (\bar{\tau}_a\ge \epsilon) + \mathbb{P} (\bar{\tau}_a \le \epsilon, Z_{\bar{\tau}_a}\ge 1) \]
The first term on the right hand side is less than or equal to $\mathbb{E} \bar{\tau}_a/\epsilon$ and the second term is less than or equal
\[ \mathbb{P} (\sup_{0\le t\le \epsilon} Z_t \ge 1)\]
which by Doob's maximal inequality is less than or equal to $\mathbb{E} Z_\epsilon = a$. Hence,
\[ \mathbb{P} (\tau_a\ge \epsilon) \le \mathbb{E} \bar{\tau}_a/\epsilon + a \]
and from Lemma~\ref{lemma:tau_bar} the statement follows.

\end{proof}

\end{document}